\title{Consequences of Matrices Sharing Eigenvalues and Eigenvectors}
\author{Kayla Miller\thanks{Mount Greylock and Williams College}
        \and
        Steven J. Miller\thanks{Williams College}
        \and
        Fuzhen Zhang\thanks{Nova Southeastern University}               
        }
\documentclass{article}
\usepackage{graphicx}
\usepackage{amsbsy}

\usepackage{thmtools}
\usepackage{times}
\usepackage[T1]{fontenc}
\usepackage{mathrsfs}
\usepackage{latexsym}
\usepackage{flafter,epsf}
\usepackage{amsmath,amsfonts,amsthm,amssymb,amscd}
\usepackage{color}
\usepackage{fix-cm}
\usepackage{hyperref}
\hypersetup{colorlinks=true,linkcolor=blue}
\usepackage{color}
\usepackage{url}

\newtheorem{theorem}{Theorem}

\newtheorem{rek}{Remark}
\newtheorem{exa}{Example}
\newtheorem{corollary}{Corollary}

\newcommand{\vv}{v}

\newcommand{\twocase}[5]{#1 \begin{cases} #2 & \text{#3}\\ #4
&\text{#5} \end{cases}   }

\newcommand{\mattwo}[4]
{\left[\begin{array}{cc}
                        #1  & #2   \\
                        #3 &  #4
                          \end{array}\right] }
\newcommand{\vect}[1]{#1}

\newcommand{\vectwo}[2]
{\left[\begin{array}{c}
                        #1   \\
                        #2
\end{array}\right] }

\newcommand{\matthree}[9]
{\left[\begin{array}{ccc}
                        #1  & #2 & #3  \\
                        #4 &  #5 & #6 \\
                        #7 &  #8 & #9
                          \end{array}\right] }

\newcommand{\vecfour}[4]
{\left[\begin{array}{c}
                        #1   \\
                        #2   \\
                        #3   \\
                        #4
\end{array}\right] }

\begin{document}
\newpage
\maketitle
\begin{abstract}
While studying for a linear algebra final, the first named author prepared some test questions for herself to see how well she understood the material, and asked the second named author: \emph{If $A$ and $A^T$ have the same eigenvalues and eigenvectors, is $A$ a symmetric matrix?} We show how this excellent question is a great springboard to related questions, in particular when do equal eigenvalues and eigenvectors imply the matrices are, if not equal, at least closely related (such as similar or the transpose/complex conjugate transpose of each other)? The answer depends on how we interpret the question, and provides a great opportunity to talk about creating good questions. In particular, we characterize matrices $A$ for which the transpose or conjugate transpose shares the same eigenvectors (regardless of eigenvalues) and, for each eigenvalue, the same eigenpair (equivalently, the same eigenspace). Thus,  a square matrix $A$ is Hermitian if and only if $A^*$ has the same eigenpairs as $A$; moreover, if $A$ is a real matrix with real eigenvalues and $A^T$ has the same eigenvectors as $A$, then $A$ is symmetric.
\end{abstract}




\section{Introduction}

\begin{quote} \emph{Our similarities bring us to a common ground; our differences allow us to be fascinated by each other.} --Tom Robbins
\end{quote}

\medskip

While the first named author was reviewing linear algebra concepts for her final exam,
she thought of questions that could extend her  understanding. One of the most important topics in a first-year course is eigenvalues and eigenvectors. Recall that a non-zero vector $v$ is an eigenvector of an $n \times n$ matrix $A$ if $A\vv = \lambda \vv$ for some $\lambda \in \mathbb{C}$. The standard way to compute eigenvalues  is through the determinant equation\footnote{For large matrices this is a terrible approach, as it leads to solving for the roots of a polynomial of degree $n$.}: if $A\vv = \lambda \vv$ then $(A - \lambda I)\vv = \vect{0}$, which means that $A - \lambda I$ is a singular matrix and hence has determinant zero. Thus while $A$, its transpose $A^T$, and its complex-conjugate transpose $A^\ast$ (in case of real eigenvalues)    all have the same eigenvalues, they need not have the same eigenvectors.
\medskip

She asked the second author the following question: \emph{If $A$ and $A^T$ have the same eigenvectors must they be the same? In other words, does sharing the same eigenvalues and eigenvectors force $A$ to be symmetric?}

\medskip
As with many questions, it's possible to implicitly assume additional properties that are not  explicitly stated. When told $A$ and $A^T$ have the same eigenvalues and eigenvectors, does this mean that there is a full set of $n$ linearly independent eigenvectors? Does it mean that a shared eigenvector of $A$ and $A^T$ corresponds to the same eigenvalue, or perhaps while the two matrices have the same eigenvalues and eigenvectors, they may not have the same \emph{pairs} of eigenvalues and eigenvectors.
\medskip

While in conversations this was clarified to having the eigenvectors associated with the same eigenvalues (though not necessarily having $n$ linearly independent eigenvectors), the fact that the original question could be interpreted in multiple different ways, depending on what unspoken assumptions the listener made, led to the interesting questions which we pursue below (the second named author heard a talk by the third named author at the 27th Conference of the International Linear Algebra Society at Virginia Tech, and asked him these questions after his talk).
\medskip

We characterize matrices $A$ for which the transpose $A^T$ (or conjugate transpose $A^*$) shares the same eigenvectors as $A$ (regardless of eigenvalues) and, for each eigenvalue, the same eigenpair (equivalently, the same eigenspace). As a result, a square matrix $A$ is Hermitian if and only if $A^*$ has the same eigenpairs as $A$; moreover, if $A$ is a real matrix with real eigenvalues such that $A^T$ has the same eigenvectors as $A$, then $A^T = A$.

\newpage

After giving some examples to build intuition, we prove the following.

\begin{theorem}\label{Thm:Normality}
Every eigenvector  of $A$ is an eigenvector (regardless of the eigenvalues) of $A^*$ if and only if $A$ is \emph{normal},
that is,  $A^*A=AA^*$.
\end{theorem}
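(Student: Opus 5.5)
We prove the two directions separately. Normality will give the forward direction directly. For the converse, we induct on $n$ using Schur's triangularization: assume every eigenvector of $A$ is also an eigenvector of $A^*$, and show $A$ is unitarily diagonalizable.

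\emph{($\Leftarrow$)} Suppose $A^*A=AA^*$. For any vector $x$,
\[
\|Ax\|^2=x^*A^*Ax=x^*AA^*x=\|A^*x\|^2 .
\]
The matrix $A-\lambda I$ is also normal, since its adjoint $A^*-\bar\lambda I$ commutes with it. Applying the identity to $A-\lambda I$ gives $\|(A-\lambda I)v\|=\|(A^*-\bar\lambda I)v\|$. Hence $Av=\lambda v$ implies $A^*v=\bar\lambda v$, so every eigenvector of $A$ is an eigenvector of $A^*$.

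\emph{($\Rightarrow$)} We induct on $n$; the case $n=1$ is trivial. Pick an eigenvector $v$ of $A$ with $\|v\|=1$ and $Av=\lambda v$. By hypothesis $A^*v=\mu v$ for some $\mu$, and then
\[
\mu=v^*A^*v=\overline{v^*Av}=\bar\lambda .
\]
Let $W=v^\perp$. The subspace $W$ is $A$-invariant: for $w\in W$,
\[
\langle Aw,v\rangle=\langle w,A^*v\rangle=\lambda\langle w,v\rangle=0 .
\]
The same computation shows $W$ is $A^*$-invariant. Choose a unitary $U$ whose first column is $v$. Then
\[
U^*AU=\mattwo{\lambda}{0}{0}{B}
\]
with $B$ of size $n-1$, and $U^*A^*U$ is the corresponding block matrix with $\bar\lambda$ and $B^*$.

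To apply induction, we must check that $B$ inherits the hypothesis. Let $y$ be an eigenvector of $B$, with $By=\beta y$. Then $x=U\,(0,y)^T$ lies in $W$ and is an eigenvector of $A$. By hypothesis $x$ is an eigenvector of $A^*$. Since $A^*$ acts on $W$ through $B^*$, this means $B^*y$ is a multiple of $y$. By induction $B$ is normal, so $U^*AU=\operatorname{diag}(\lambda,B)$ is normal, and hence $A$ is normal.

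The main obstacle is getting started. We need the identification $\mu=\bar\lambda$ and the invariance of $v^\perp$ under $A$, and both come from the inner-product computation above, which requires $v$ normalized. Once these are in hand, the reduction to $B$ carries the ``every eigenvector'' hypothesis along directly, because eigenvectors of $B$ correspond exactly to eigenvectors of $A$ lying in $W$. The argument does not need $A$ to have $n$ independent eigenvectors. Diagonalizability is a consequence: the induction shows $A$ is unitarily diagonalizable.
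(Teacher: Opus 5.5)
Your proof is correct. For the substantive direction (eigenvector property $\Rightarrow$ normal) it is the paper's argument: normalize $v$, obtain $\mu=\bar\lambda$ from $v^*Av$, deflate with a unitary whose first column is $v$ to get $U^*AU=\mathrm{diag}(\lambda,B)$, and induct. What you use is this single deflation step, not Schur's theorem itself. You also supply a detail the paper only asserts: you show why $B$ inherits the hypothesis by lifting an eigenvector $y$ of $B$ to $U(0,y)^T\in v^\perp$.

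The genuine difference is in the easy direction. The paper invokes the spectral theorem, writes $A=U^*DU$, and reads off $Av=\lambda v\iff A^*v=\bar\lambda v$ coordinatewise from the diagonal $D$. You instead apply $\|Ax\|=\|A^*x\|$ to the normal matrix $A-\lambda I$. This needs nothing beyond the definition of normality, and it gives the same biconditional at once, since $\|(A-\lambda I)v\|=\|(A^*-\bar\lambda I)v\|$ vanishes on one side exactly when it vanishes on the other.

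As a result, your proof never appeals to the spectral theorem. In fact, if you phrase the induction's conclusion as ``unitarily diagonalizable'' rather than ``normal,'' chaining your two directions reproves the spectral theorem for normal matrices. This also makes your closing claim that ``the induction shows $A$ is unitarily diagonalizable'' literally true, whereas as written your induction concludes only normality. The paper's version is shorter if the spectral theorem is taken as known; yours is self-contained.
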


\begin{theorem}\label{Thm:Normality2}
If every eigenpair of $A$ is an eigenpair of $A^*$; to be exact,
\begin{equation*}
Av = \lambda v
 \quad \Rightarrow \quad
A^*v = \lambda v,
\end{equation*}
then $A$ is a Hermitian matrix, that is, $A^*=A$ (thus all eigenvalues are real).
\end{theorem}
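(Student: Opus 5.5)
Our plan is to reduce to Theorem~\ref{Thm:Normality} and then use the spectral theorem for normal matrices. The hypothesis of Theorem~\ref{Thm:Normality2} says that every eigenvector of $A$ is an eigenvector of $A^*$, with a particular eigenvalue. In particular, the weaker hypothesis of Theorem~\ref{Thm:Normality} holds, so $A$ is normal: $A^*A = AA^*$.

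Next we invoke the spectral theorem for normal matrices. There is a unitary $U$ whose columns $u_1,\dots,u_n$ are orthonormal eigenvectors of $A$, say $Au_i=\lambda_i u_i$, so that $A = U\,\mathrm{diag}(\lambda_1,\dots,\lambda_n)\,U^*$. For a normal matrix we also know $A^*u_i = \overline{\lambda_i}\,u_i$, since $\|(A-\lambda I)v\| = \|(A^*-\overline{\lambda} I)v\|$ when $A$ is normal. By hypothesis, however, $A^*u_i = \lambda_i u_i$. Comparing the two expressions for the nonzero vector $u_i$ gives $\lambda_i = \overline{\lambda_i}$, so every eigenvalue is real. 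Hence
\[
A^* = U\,\mathrm{diag}(\overline{\lambda_1},\dots,\overline{\lambda_n})\,U^* = U\,\mathrm{diag}(\lambda_1,\dots,\lambda_n)\,U^* = A .
\]

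An even more direct route is available: since the $u_i$ form a basis and $A^*u_i=\lambda_i u_i = Au_i$ for each $i$, the two linear maps $A^*$ and $A$ agree on a basis and are therefore equal. Either way, the only substantive input is the existence of a full basis of eigenvectors of $A$, which is exactly what normality provides. So the main step is the appeal to Theorem~\ref{Thm:Normality}. It must be applied to the weaker ``regardless of eigenvalue'' condition, which our hypothesis clearly implies.

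If we wanted to avoid relying on Theorem~\ref{Thm:Normality}, the obstacle would be producing that eigenbasis directly. We would proceed by induction using Schur triangularization. Take a unit eigenvector $v$ with $Av=\lambda v$; then $A^*v=\lambda v$. Consequently $v^\perp$ is invariant under $A$: for $w\perp v$ we have $\langle Aw, v\rangle = \langle w, A^*v\rangle = \overline{\lambda}\langle w,v\rangle = 0$. The restriction of $A$ to $v^\perp$ inherits the hypothesis, so induction yields a unitary diagonalization, and we conclude as above.
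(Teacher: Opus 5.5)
Your proposal is correct and essentially matches the paper's proof. The paper also invokes Theorem~\ref{Thm:Normality} to get normality, takes an orthonormal eigenbasis $v_1,\dots,v_n$ of $A$, and concludes from $(A-A^*)v_k=0$ for all $k$ that $A=A^*$, which is exactly your ``more direct route''; your first route (showing $\lambda_i=\overline{\lambda_i}$ via $A^*u_i=\overline{\lambda_i}u_i$) and the inductive sketch are valid but unnecessary extras.
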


\section{Same Eigenvalues and Eigenvectors but Not Similar}

We begin with some elementary arguments and examples. Is it possible for two $n \times n$ matrices to have the same eigenvalues \emph{and} the same eigenvectors without being similar\footnote{Matrices $A$ and $B$ are similar if there exists an invertible matrix $S$ such that $S^{-1} A S = B$; a simple computation shows that if $A$ is similar to $B$ then $B$ is similar to $A$.}? At first glance the problem seems trivial. Let the $n$ eigenvalues be $\lambda_1, \dots, \lambda_n$ with corresponding $n$ eigenvectors $\vect{v_1}, \dots, \vect{v_n}$ (column vectors in $\Bbb C^n$), consider the matrix $S=[v_1, \dots, v_n]$,
  and  note that
$$AS=[Av_1, \dots, Av_n]= S\,{\rm diag} (\lambda_1, \dots, \lambda_n].$$
We see that
$$S^{-1} A S \ = \ \matthree{\lambda_1}{}{}{}{\ddots}{}{}{}{\lambda_n} \ = \ S^{-1} B S.$$  So,  $S$ diagonalizes both $A$ and $B$. Thus,  not only are $A$ and $B$ similar but they are equal!
\medskip

Unfortunately, we made a common mistake in this argument:  we assumed that there were $n$ linear independent eigenvectors. This is not always true. If an eigenvalue $\lambda$ has algebraic multiplicity $k$ (which means $x = \lambda$ is a root of multiplicity exactly $k$ of $\det(A - x I)$), it need not have geometric multiplicity $k$; in other words, the subspace of eigenvectors with eigenvalue $\lambda$ need not have dimension $k$.
The simplest example of this is the matrix $$\mattwo{\lambda}{1}{0}{\lambda};$$ $\lambda$ is an eigenvalue with algebraic multiplicity 2 but the eigenspace has dimension 1, spanned by $\vectwo{1}{0}$.
\medskip

What if we assume that $A$ and $B$ have the same eigenvalues with the same multiplicities and the same eigenvectors, which are linear independent -- does our argument hold then, and must the matrices be equal? Sadly no, as in our discussion we assumed something that we were not given, namely that they have the same \emph{eigenpairs}: $$A \vect{v_i} \ = \ \lambda_i \vect{v_i} \ = \ B \vect{v_i}.$$ It's possible for them to have the same eigenvalues and the same eigenvectors, but different eigenpairs.
\medskip

 Consider $$A \ = \ \mattwo{1}{0}{0}{2} \ \ \ {\rm  and} \ \ \ B \ = \ \mattwo{2}{0}{0}{1}.$$ Both have eigenvalues 1 and 2, both have eigenvectors $\vect{v_1} = \vectwo{1}{0}$ and $\vect{v_2} = \vectwo{0}{1}$, but $$A \vect{v_1} \ = \ 1 \vect{v_1} \ \ \ {\rm and} \ \ \ B \vect{v_1} \ = \ 2 \vect{v_1}$$ while $$A \vect{v_2} \ = \ 2 \vect{v_2} \ \ \ {\rm and} \ \ \ B \vect{v_2} \ = \ 1 \vect{v_2}.$$ In this case $A$ and $B$ are clearly not equal, though they are similar\footnote{We leave it to the reader to figure out how to find $S$.}: $$S^{-1} A S \ = \ B \ \ \ {\rm  where} \ \ \ S \ = \ \mattwo{\ \ 0}{1}{1}{0}.$$

We have gone from ``the matrices must be equal" to ``they can be similar but not equal". Can we find an example where the matrices have the same eigenvalues and eigenvectors, but they are not similar? Yes! Consider
\begin{equation}\label{eq:ABsameevvnotsamejcc} A \ = \
\left[\begin{array}{cccc}
0 & 0 & 0 & 0 \\
0 & 0 & 0 & 0 \\
0 & 0 & 1 & 1 \\
0 & 0 & 0 & 1
\end{array}\right] \ \ \ {\rm and}\ \ \ B \ = \ \left[\begin{array}{cccc}
1 & 0 & 0 & 0 \\
0 & 1 & 0 & 0 \\
0 & 0 & 0 & 1 \\
0 & 0 & 0 & 0 \\
\end{array}\right].
\end{equation}

Both matrices have the same set of eigenvalues with the same multiplicities: $\{0, 0, 1, 1\}$. Each of $A$ and $B$ has three eigenvectors:
\begin{equation}
    \vect{v_1} \ = \ \vecfour{1}{0}{0}{0}, \ \ \
    \vect{v_2} \ = \ \vecfour{0}{1}{0}{0}, \ \ \
    \vect{v_3} \ = \ \vecfour{0}{0}{1}{0},
\end{equation}
but they are associated to different eigenvalues; for $A$,  $\vect{v_1}$ and $\vect{v_2}$ have eigenvalue 0 while $\vect{v_3}$ has eigenvalue 1, while for $B$ it's the opposite and $\vect{v_1}$ and $\vect{v_2}$ have eigenvalue 1 while $\vect{v_3}$ has eigenvalue 0. The easiest way to see these two matrices cannot be similar is to note that $A$ and $B$ have different   Jordan Canonical Forms, and similar matrices have the same Jordan Canonical Form.

\begin{rek}[Jordan Canonical Form] Sadly Jordan Canonical Form is typically not covered in most first courses on the subject; while not every matrix can be diagonalized, every matrix can be brought into block diagonal form where each $k \times k$ block is all zeros except  for an eigenvalue with multiplicity $k$ along the main diagonal and 1's on the diagonal immediately above it (see, e.g., \cite[Section\,3.4]{Zh}).
 Thus the first few Jordan blocks by size are \begin{equation} (\lambda), \ \ \ \mattwo{\lambda}{1}{0}{\lambda}, \ \ \  \matthree{\lambda}{1}{0}{0}{\lambda}{1}{0}{0}{\lambda}, \ \ \  \dots. \end{equation} Returning to  \eqref{eq:ABsameevvnotsamejcc}, we have \begin{equation}
    A \ = \ \matthree{[0]}{}{}{}{[0]}{}{}{}{\mattwo{1}{1}{0}{1}}, \ \ \ B \ =\  \matthree{[1]}{}{}{}{[1]}{}{}{}{\mattwo{0}{1}{0}{0}};
\end{equation} both have three Jordan blocks but $A$ has two $1\times 1$ blocks with eigenvalue 0 and one $2\times 2$ block with eigenvalue 1, while   $B$ has
two $1\times 1$ blocks with eigenvalue 1 and one $2\times 2$ block with eigenvalue 0.
\end{rek}

Thus it's  possible for two matrices to have the same eigenvalues and eigenvectors, but if the eigenvectors are not paired with the same eigenvalues the matrices need not be similar. Armed with the experience from these examples, we now turn to proving our theorems on the characterization of paired matrices based on certain properties of their eigenvalues and eigenvectors.



\section{Proofs of Main Results}

\subsection{Preliminaries}

We first recall some notation. As always, $A$ is an $n\times n$ real or complex matrix. If $Av=\lambda v$ for some scalar $\lambda \in \mathbb{C}$ (the field of complex numbers) and some nonzero vector
$v\in \mathbb{C}^n$ (the complex $n$-tuples written as  column vectors), then $\lambda$ is called an eigenvalue of $A$, $v$ is the eigenvector of $A$ associated with the eigenvalue $\lambda$, and $(\lambda,v)$ is an eigenpair of $A$.

\medskip
\begin{itemize}

\item Let $\sigma (A)$ be the multiset of the eigenvalues of $A$, i.e., the \emph{spectrum} of $A$.
\item Let $V_{\lambda}(A)=\{x\in \mathbb{C}^n : Ax=\lambda x\}$ be the \emph{eigenspace} of $A$ associated with $\lambda$.

\end{itemize}

It's obvious that for each eigenvalue  $\lambda$,
 $V_{\lambda}(A)=\{0\}\cup \{x \in \Bbb C^n : \mbox{$(\lambda, x)$ is an eigenpair of $A$}\}$.
 \medskip

We begin with a basic, ``simple'' question: If   $A$ and $B$  have exactly the same eigenvalues
(the spectrum) and the same corresponding eigenvectors (the eigenspace), does $A=B$?
The answer is negative in general.

\begin{exa}\label{exa:1}
\rm The matrices
 $A=\begin{bmatrix} 1 & 1  \\0 & 1\end{bmatrix}$  and
$B= \begin{bmatrix} 1 & 2\\ 0 & 1\end{bmatrix}$
  have the same eigenvalues $\lambda_1=\lambda_2=1$, and the same
 associated eigenvectors in the form  $ \begin{bmatrix} c \\ 0\end{bmatrix}$,
 $c\not =0$.
\end{exa}


This example shows that having the same eigenpairs does not uniquely determine the matrix in general  (unless the eigenvectors  form a basis).
\medskip

Note that in this case the two matrices are similar. If we assume a bit more, they are forced to be equal. For example, assume that $A$ is  diagonalizable. If $B$ has exactly the same eigenpairs as $A$, then $A=B$; if $B$ has exactly the same eigenvectors (regardless of the eigenvalues) as $A$, then $B$ is diagonalizable.
\medskip

%
%
%
%
%

\begin{exa}\rm
Consider the following two \(4 \times 4\) matrices:
\[
A = \begin{bmatrix}
0 & 0 & 1 & 0 \\
0 & 0 & 0 & 1 \\
0 & 0 & 0 & 0 \\
0 & 0 & 0 & 0
\end{bmatrix}\quad \mbox{and}\quad
B = \begin{bmatrix}
0 & 0 & 1 & 0 \\
0 & 0 & 0 & 0 \\
0 & 0 & 0 & 1 \\
0 & 0 & 0 & 0
\end{bmatrix}.
\]

The only eigenvalue  of $A$ and $B$ is 0
 with multiplicity 4. For $A$, solving $Ax=0$ with $x=(x_1, x_2, x_3, x_4)^T$  gives $x_3=0$, $x_4=0$, so $x=(x_1, x_2, 0, 0)^T$.
For $B$, solving $Bx=0$ gives $x_3=0$ and $x_4=0$, so again $x=(x_1, x_2, 0,0)^T$. Thus, $A$ and $B$  have the same eigenspace, i.e., the same eigenpair.
\medskip

However, $A$ and $B$ are not similar. Note that
$A^2 = 0$, 
while $B^2 \neq 0$.
Therefore, having the same eigenpairs is not enough to imply the similarity of the
two matrices.
\end{exa}

What happens if $B=A^T$, the transpose of $A$, or $B=A^*$, the conjugate transpose of $A$. As remarked earlier, from the characteristic polynomial we see that the  eigenvalues of $A^T$ are exactly the eigenvalues of $A$ (i.e., $\sigma (A^T)=\sigma (A)$);
the eigenvalues of $A^*$ are the conjugates of the eigenvalues of $A$
 (i.e.,  $\sigma (A^*)=\overline{\sigma (A)}$\,).  However,
the eigenvectors of $A^T$ (and $A^*$) can be different from those of $A$.

\begin{exa}\rm
 Let  $A=\begin{bmatrix} 1 & 1 \\0 & 2\end{bmatrix}$. Then
$A^T=A^*=\begin{bmatrix} 1 & 0 \\1 & 2\end{bmatrix}$.
The eigenvalues of $A$ (the same as those of $A^T$)  are
$\lambda_1=1$ and $\lambda_2=2$. By computation, we have

$$V_{1}(A)\ = \ \left \{ c \begin{bmatrix} 1 \\ 0\end{bmatrix} : c\in \Bbb C\right  \}, \quad
V_{2}(A)\ = \ \left  \{ c \begin{bmatrix} 1 \\ 1 \end{bmatrix} : c\in \Bbb C\right \} $$
and
$$V_{1}(A^T)\ = \ \left \{ c \begin{bmatrix} 1 \\ -1\end{bmatrix} : c\in \Bbb C\right  \}, \quad
V_{2}(A^T) \ = \ \left  \{ c \begin{bmatrix} 0 \\ 1 \end{bmatrix} : c\in \Bbb C\right \}.$$
Any two of these eigenspaces contain no nonzero eigenvectors in common.
\end{exa}

In fact, more  is known about $A$ and $A^T$ (and $A^*$) in terms of similarity:
$A^T$ is always similar to $A$ (via Jordan blocks) (see, e.g.,
\cite[Theorem\,3.5.1, p.\,153]{Zh});  $A^*$ is similar to $A$ if and only if
$A$ is similar to a real matrix and if and only if $A$ is a product of two Hermitian matrices
(see,   e.g., \cite[Theorem\,8.2.2, p.\,351]{Zh}).
\medskip


We can now prove our characterization of those matrices  $A$ for which  $A$ and $A^T$ (or $A^*$) have exactly the same set of  eigenvectors (regardless of eigenvalues) and exactly the same eigenpair (equivalently, the same eigenspace) for each eigenvalue. We assume that the order of the matrix $A$ is $n$.

\subsection{Proof of Theorem \ref{Thm:Normality}} We restate our first main result, and then give the proof.  We assume that the reader is familiar with the basic spectral decomposition theorems from a first course in linear algebra.
\medskip

\noindent \textbf{Theorem \ref{Thm:Normality}.} \emph{Every eigenvector  of $A$ is an eigenvector (regardless of the eigenvalues)
of $A^*$ if and only if $A$ is normal,
that is,  $A^*A=AA^*$.}



\begin{proof}  By the spectral decomposition theorem (see,   e.g., \cite[Theorem\,3.2.2, p.\,121]{Zh}), a square matrix is normal if and only if it's unitarily diagonalizable.
\medskip

If $A$ is normal, then there exists a unitary matrix\footnote{A matrix $U$ is unitary if $UU^* = U^*U = I$.} $U$ such that
$A=U^*DU$, where $D$ is a diagonal matrix whose diagonal entries are the eigenvalues of $A$.
Thus,  $A^*=U^*D^*U$. Note that  
 $$Av \ = \ \lambda v \  \ \Leftrightarrow \  \ U^*DU v\ = \ \lambda v  \ \  \Leftrightarrow \  \  Dw \ =\ \lambda w$$
 (where $w=Uv$). Continuing, the above is true if and only if $$\overline{D}w \ = \
 \overline{\lambda} w \
 \   \Leftrightarrow  \ D^*Uv \ = \
 \overline{\lambda} Uv    \ \Leftrightarrow
 \   U^*D^*Uv \ = \
 \overline{\lambda} U^*Uv    \ \Leftrightarrow  \
  A^*v\ =\
 \overline{\lambda} v.$$  
 So, for normal $A$,   $(\lambda, v)$ is an eigenpair of $A$ if and only if  $(\overline{\lambda}, v)$ is an eigenpair of $A^*$.
 \medskip

For the other direction, suppose that
$Av = av$ \mbox{for some $a\in \Bbb C$  and $v\in \Bbb C^n$}
  implies  that $A^*v = bv $ \mbox{for some $b\in \Bbb C$}.
We show that $A$ is a  normal matrix.
\medskip

Without loss of generality,  we may rescale and assume that the eigenvector $v$ has length 1. Note that
$$v^*Av \ = \  a v^*v \ = \ a.$$ Taking complex conjugates yields $$\bar{a} \ = \ (v^*Av)^* \ = \  v^*A^*v \ = \ bv^*v\ = \ b.$$
Thus,  if $(a, v)$ is an eigenpair of $A$, then $(\bar{a}, v)$ is an eigenpair of $A^*$.
\medskip

We complete the proof by showing that $A$ is unitarily diagonalizable by induction on $n$.
\medskip

If $n=1$, there is nothing to show. Suppose that the statement holds for matrices of order less than $n$. We reduce the normality of $A$ to that of a matrix of order $n-1$ that inherits the eigenpair property of $A$.
\medskip

Let $v_1=v, v_2, \dots, v_n$ be an orthonormal basis of $\Bbb C^n$. Then \begin{equation} \twocase{v_j^*Av_1 \ = \ }{a}{if $j=1$}{0}{otherwise,} \end{equation} which implies \begin{equation} \twocase{v_j^*A^*v_1 \ = \ }{\bar{a}}{if $j=1$}{0}{otherwise.} \end{equation} Let $V=(v_1, v_2, \dots, v_n)$. Then $V$ is a unitary matrix (as the $v$'s are an orthonormal basis of $\Bbb C^n$).
\medskip

Note that
$$AV \ = \ (Av_1, Av_2, \dots, Av_n) \ = \ (a v_1, Av_2, \dots, Av_n).$$ The first column of $V^*AV$,
consisting of all  $v^*_jAv_1$,     is
$\begin{bmatrix}
a   \\
0 \end{bmatrix}$ (here 0 means we have an entry of zero a total of $n-1$ times). \\ \

In a similar way, the first column of $V^*A^*V$ is $ V^*A^*v_1=
 \begin{bmatrix}
\bar{a}  \\
0 \end{bmatrix}$. Because $(V^*AV)^*=V^*A^*V$ and  the first column of $V^*A^*V$ is the conjugate transpose of the first row of $V^*AV$,
 we see that $V^*AV$ has the form
$$
V^*AV \ =  \  \begin{bmatrix}
a & 0 \\
0 & A_1 \end{bmatrix}$$ for   
some matrix $A_1$ of order $n-1$. As every eigenvector of $A$ is an
eigenvector of $A^*$,   every eigenvector of $A_1$ is an
eigenvector of $A^*_1$.
By the induction hypothesis, $A_1$ is normal. Thus,  $A$ is unitarily similar to a  normal matrix, and $A$ is normal.
\end{proof}

The above theorem  is Theorem 9.1.1\,(6) of \cite{Zh}; the proof there is slightly different.

\begin{rek} The statement that if $A$ is normal then every eigenvector  of $A$ is an eigenvector of $A^*$ is also shown as follows. For normal $A$,  $A^*$ can be expressed as a polynomial in $A$, say $A^*=p(A)$ for some polynomial $p$ (see, e.g., Theorem 9.1.1\,(3) of \cite{Zh}). It follows that $A^*v=p(A)v =p(\lambda )v$ if $Av=\lambda v$. \end{rek}

Theorem\,\ref{Thm:Normality}  can be simply restated as: \emph{$A$ is normal if and only if $A$ and $A^*$ have the same eigenvectors (possibly corresponding to different eigenvalues).}
\medskip

The next example shows that $A$ need not be symmetric.

\begin{exa}\rm  The matrix $A=\begin{bmatrix} 1 & i \\-i & 1\end{bmatrix}$ is Hermitian (normal). While $A$ and $A^T$ have exactly  the same eigenvectors, $A\not =A^T$. Note that $A$ and $A^*(=A)$ have the same eigenvalues, the same eigenspaces (and the same eigenpairs), while $A$ and $A^T$ have the same eigenvalues but different eigenspaces.
 \end{exa}

The next example shows that $A$ need not be Hermitian.

\begin{exa}\label{Exa:4notHer}
\rm We give an example where $A$ and $A^*$ have exactly  the same eigenvectors, but $A\not =A^*$. Let $A=\begin{bmatrix} 1 & i \\i & 1\end{bmatrix}$; the matrix is normal and
 has eigenvalues $\lambda_1 =1+i$ and $\lambda_2 =1-i$ with
corresponding eigenvectors $v_1=\begin{bmatrix}  1 \\ 1 \end{bmatrix}$ and
$v_2=\begin{bmatrix}  1 \\ -1 \end{bmatrix}$ (up to scalar multiples), respectively. The eigenpairs of $A$ are
$(\lambda_1, v_1)$ and $(\lambda_2, v_2)$. It follows that
$$V_{\lambda_1}(A) \ = \ \left \{ c\begin{bmatrix} 1 \\ 1\end{bmatrix} :c\in \Bbb C\right  \}, \quad
V_{\lambda_2}(A) \  = \  \left  \{ c\begin{bmatrix} 1 \\ -1 \end{bmatrix} :c\in \Bbb C\right \}.$$ We find that $A^*=\begin{bmatrix} 1 & -i \\-i & 1\end{bmatrix}$  has eigenvalues  $\mu_1 =1+i$ and $\mu_2 =1-i$ with
corresponding eigenvectors $u_1=\begin{bmatrix}  1 \\ -1 \end{bmatrix}$ and
$u_2=\begin{bmatrix}  1 \\ 1 \end{bmatrix}$ (up to scalar multiples), respectively.
The eigenpairs of $A^*$ are
$(\mu_1, u_1)$ and $(\mu_2, u_2)$.  It follows that
$$V_{\mu_1}(A^*) \ = \ \left \{ c\begin{bmatrix} 1 \\ -1\end{bmatrix} :c\in \Bbb C\right  \}, \quad
V_{\mu_2}(A^*) \ = \ \left  \{ c\begin{bmatrix} 1 \\ 1\end{bmatrix} : c\in \Bbb C\right \}.$$
Note that  this example shows that  every eigenvector of $A^*$ is an eigenvector of $A$ (associated with different eigenvalues), that is, $A$ and $A^*$ have exactly the same eigenvectors, but different eigenpairs (thus different eigenspaces).
\end{exa}

\subsection{Proof of Theorem \ref{Thm:Normality2}} We end with a proof of our second main result. \\ \

\noindent \textbf{Theorem \ref{Thm:Normality2}.} \emph{If every eigenpair of $A$ is an eigenpair of $A^*$; to be exact,
\begin{equation*}
Av = \lambda v
 \quad \Rightarrow \quad
A^*v = \lambda v,
\end{equation*}
then $A$ is a Hermitian matrix, that is, $A^*=A$ (thus all eigenvalues are real).}

\begin{proof}  By Theorem \ref{Thm:Normality}, $A$ is normal. Thus, $A$ possesses $n$     orthonormal eigenvectors, say
$v_1, \dots, v_n$,  which  form a basis for $\Bbb C^n$. We have
 $(A-A^*)v_k=Av_k-A^*v_k =0$, $k=1, 2, \dots, n$. So, $A-A^*=0$ and $A$ is Hermitian. \end{proof}

Theorem \ref{Thm:Normality2} can be restated simply as: \emph{$A$ is a Hermitian matrix if and only if $A$ and $A^*$ have the same eigenpairs for every eigenvalue (of $A$).}
\medskip

In fact, if $\lambda$ is an eigenvalue of $A$, then
$\overline{\lambda}$ is an eigenvalue of $A^*$.
In view of  Theorem~\ref{Thm:Normality2}, it's  tempting to believe the following statement is true: \emph{If  for every eigenpair, $(\lambda ,v)$,  of $A$,  $(\bar{\lambda},v)$ is an eigenpair of $A^*$, then $A$ is Hermitian, i.e., $A^*=A$.} However, Example \ref{Exa:4notHer} shows that this statement is false.

\begin{corollary}\label{Cor:1}
If $A$ is a real matrix with real eigenvalues such that  every   eigenvector of $A$ is an eigenvector of $A^T$,
then $A$ is symmetric, i.e., $A^T=A$.
\end{corollary}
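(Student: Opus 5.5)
Since $A$ is real, $A^T = A^*$, so the hypothesis says exactly that every eigenvector of $A$ is an eigenvector of $A^*$. The plan is to apply Theorem~\ref{Thm:Normality} to get normality, then use the argument from the proof of Theorem~\ref{Thm:Normality2}, with real eigenvalues letting us match the eigenpairs of $A$ and $A^*$.

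First, Theorem~\ref{Thm:Normality} gives that $A$ is normal. Its proof shows more: if $Av=\lambda v$ with $v\neq 0$, then $A^*v=\overline{\lambda}v$. This comes from normalizing $v$ and computing $v^*A^*v=\overline{v^*Av}$. By hypothesis every eigenvalue $\lambda$ of $A$ is real, so $\overline{\lambda}=\lambda$. Hence every eigenpair $(\lambda,v)$ of $A$ is also an eigenpair of $A^*=A^T$, which is exactly the hypothesis of Theorem~\ref{Thm:Normality2}.

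Theorem~\ref{Thm:Normality2} then gives $A^*=A$, and since $A$ is real, $A^T=A^*=A$. Concretely, normality supplies an orthonormal eigenbasis $v_1,\dots,v_n$ of $\mathbb{C}^n$, and $(A-A^T)v_k=\lambda_k v_k-\lambda_k v_k=0$ for each $k$, so $A-A^T=0$.

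There is one point to check. The eigenvectors in the hypothesis should be allowed to be complex vectors in $\mathbb{C}^n$, which is the convention of the preliminaries. Even so, real eigenvalues give real eigenspaces spanned by real vectors, so the conclusion holds under either reading. I expect no real obstacle beyond noting that $\overline{\lambda}=\lambda$ is where the real-eigenvalue assumption is used. Example~\ref{Exa:4notHer} shows this assumption cannot be dropped: that matrix is not real, and it has non-real eigenvalues $1\pm i$, which is exactly why its eigenpairs for $A$ and $A^*$ differ.
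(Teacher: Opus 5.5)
Your proof is correct and is essentially the paper's argument. Theorem~\ref{Thm:Normality} gives normality, and real eigenvalues then force $A^*=A$; the paper states this in one line, while you spell it out via $A^*v=\overline{\lambda}v=\lambda v$ on an orthonormal eigenbasis, which is exactly the proof of Theorem~\ref{Thm:Normality2}. One small correction to your closing aside: Example~\ref{Exa:4notHer} is not a real matrix, so it does not show that the real-eigenvalue hypothesis is needed for real $A$. The right witness is a real normal non-symmetric matrix such as the one in the paper's Example~\ref{Exa:3by3}.
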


\begin{proof}  For real $A$, the condition  states that every eigenvector of $A$ is an eigenvector of $A^T=A^*$ (regardless of the eigenvalues). Theorem \ref{Thm:Normality}   ensures that $A$ is normal. If all eigenvalues of $A$ are real, then $A$ is Hermitian.
It follows that  $A^T=A^*=A$.
\end{proof}

Note that for a real matrix, if an eigenvalue of the matrix is real, then the corresponding eigenvectors are real. Moreover, the condition that all eigenvalues of $A$  be real in the above corollary is needed. See Example~\ref{Exa:3by3} below.
\medskip

If $A$ is nonreal, then it's possible that $A$ is nonnormal.

\begin{exa}\rm
Let $
A=
\begin{bmatrix}
1 & i\\
i & -1
\end{bmatrix}.
$
Then
$
A^T=A,
$
so $A$  and $A^T$ have exactly the same eigenpairs.
It's easy to check that $A$ is not normal.
\end{exa}

\begin{corollary}\label{Cor:4}
\rm If $A$ is a real matrix such that
every (possible complex) eigenpair    of $A$
 is an eigenpair of $A^T$,  then $A$ is symmetric, i.e., $A^T=A$.
\end{corollary}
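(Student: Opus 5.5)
Since $A$ is real, $A^T=A^*$, so the hypothesis reads: whenever $Av=\lambda v$ with $\lambda\in\Bbb C$ and $0\neq v\in\Bbb C^n$, we have $A^*v=\lambda v$. This is exactly the hypothesis of Theorem~\ref{Thm:Normality2}, applied to $A$ viewed as a complex matrix. That theorem then gives $A^*=A$, and hence $A^T=A^*=A$, so $A$ is symmetric.

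For the write-up, I would spell out the chain rather than only cite it. First, the eigenpair hypothesis implies in particular that every eigenvector of $A$ is an eigenvector of $A^*$. By Theorem~\ref{Thm:Normality}, $A$ is therefore normal, so it has an orthonormal eigenbasis $v_1,\dots,v_n$ of $\Bbb C^n$. Next, $(A-A^T)v_k=\lambda_kv_k-\lambda_kv_k=0$ for each $k$, which forces $A-A^T=0$.

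As a side remark, reality of the eigenvalues comes for free here. By the proof of Theorem~\ref{Thm:Normality}, the pair $(\bar\lambda, v)$ is an eigenpair of $A^*$. By hypothesis $(\lambda,v)$ is also an eigenpair of $A^*$, so $\lambda=\bar\lambda$. This explains why, in contrast with Corollary~\ref{Cor:1}, no assumption of real eigenvalues is needed.

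The main point of care is checking that allowing complex eigenpairs is exactly what makes the hypothesis match Theorem~\ref{Thm:Normality2}. If the hypothesis were restricted to real eigenvectors, it would not cover the complex eigenvectors of $A$, and the reduction would fail. With complex eigenpairs included, there is no real obstacle.
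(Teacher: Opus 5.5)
Your proof is correct and takes the same approach as the paper: the paper also notes that $A^T=A^*$ for real $A$ and cites Theorem~\ref{Thm:Normality2}, whose proof is exactly your chain of normality, an orthonormal eigenbasis, and $(A-A^*)v_k=0$. Your side remark that $\lambda=\bar\lambda$ is forced is also valid, and it explains why Corollary~\ref{Cor:1} needs the real-eigenvalue assumption while this statement does not.
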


\begin{proof} This is immediate from Theorem \ref{Thm:Normality2} as $A^T=A^*$. \end{proof}

In   Corollary \ref{Cor:4}, the eigenvalues and eigenvectors are possibly complex (even though $A$ is real). In other words,
$A$ and $A^T$ need not be equal if $A$ and $A^T$ just have  the same {\em real} eigenvectors (or real eigenpairs).

\begin{exa}\label{Exa:3by3}
\rm   Let
$A=\begin{bmatrix}
\ \ 0 & 1 &0  \\
-1 & 0 & 0\\
\ \ 0 & 0 & 1
\end{bmatrix}$. Obviously, $A\not = A^T$.
One checks that
$A$ is   normal,  and $A$ and $A^T$ have all the
same {real} eigenvectors in the form $\begin{bmatrix}
0  \\
0\\
r
\end{bmatrix}$, $0\not = r\in \Bbb R$,  corresponding to the eigenvalue $1$. Moreover, the eigenpairs of $A$ and $A^T$ are respectively as follows:
\medskip

\noindent
  $\lambda_1(A) =i$ with corresponding eigenvectors in the form  $$u_1 \ = \  \begin{bmatrix}
1  \\
i\\
0
\end{bmatrix}, \ \ \  0\not = c\in \Bbb C,$$
  $\lambda_2(A) =-i$ with the corresponding eigenvectors in the form  $$ u_2 \ = \ c \begin{bmatrix}
1  \\
-i\\
0
\end{bmatrix}, \ \ \  0\not = c\in \Bbb C; $$
$\lambda_1(A^T) =i$ with corresponding eigenvectors in the form  $$v_1 \ = \  \begin{bmatrix}
1  \\
-i\\
0
\end{bmatrix}, \ \ \ 0\not = c\in \Bbb C,$$
$\lambda_2(A^T) =-i$ with corresponding eigenvectors in the form  $$v_2 \ = \  \begin{bmatrix}
1  \\
i\\
0
\end{bmatrix}, \ \ \ 0\not = c\in \Bbb C.$$
\end{exa}

\begin{exa}
\rm Consider
$A=\begin{bmatrix}
0 & 1 & i \\
-1 & 0 & 1\\
-i & -1 &0
\end{bmatrix}$, which is skew-symmetric, i.e.,
$A^T=-A$.  One checks that
  $A$ is not normal (nor symmetric). 
We see
$Av=\lambda v$ if and only if $A^T v=-\lambda v$. So, $A^T$ and $A$ have the same eigenvectors (not the same eigenpairs). Let
$$B \ = \ \begin{bmatrix}
1 & 0 \\
0 & A
\end{bmatrix}\ = \ \begin{bmatrix}
1 & 0 & 0 & 0\\
0 & 0 & 1 & i \\
0 & -1 & 0 & 1\\
0 & -i & -1 &0
\end{bmatrix}.$$ Then $B$ is not normal, symmetric, or skew-symmetric, and $B^T$ has
exactly the same eigenvectors as $B$.
\end{exa}




\section*{About the authors:}

\subsection*{Kayla Miller}
Mount Greylock Regional School and Williams College, Williamstown, MA 01267: ksmill5433@gmail.com.

Kayla is a senior at Mount Greylock Regional School and a research affiliate of Williams College. This is her third
paper. She is the president of the Spanish Club at her high school. She enjoys gymnastics, reading, card games, and
spending time with friends and family. 

\subsection*{Steven J. Miller}

Williams College, Williamstown, MA 01267: sjm1@williams.edu.

Steven graduated with a B.S. in mathematics and physics from Yale and a Ph.D. in
mathematics from Princeton. He has supervised more than 600 students and
written more than 200 papers in accounting, computer science, economics,
geophysics, marketing, mathematics, operations research, physics,
sabermetrics, and statistics. He is a founding member of the Polymath Jr.
Summer Research Program and the President of the Fibonacci Association. He
loves multitasking, preferably with his family, who have published a few
papers with him and created some interesting bridge problems. He can also
teach people how to poorly solve a Rubik's cube in under an hour.

\subsection*{Fuzhen Zhang}~
Nova Southeastern University, Fort Lauderdale, Florida 33314: zhang@nova.edu.

Fuzhen received his Ph.D. in Mathematics from the University of California - Santa Barbara (UCSB). He is currently a
professor at Nova Southeastern University (NSU-Florida) in Fort Lauderdale, Florida, USA. With primary research interests in matrix analysis and linear and multilinear algebra, Dr. Zhang has published more than 90 research articles and delivered over 120 talks worldwide. He is a recipient of the NSU President's Distinguished Professor of the Year Award and the Shanghai City ``Overseas
Renowned Professors'' Award. His hobbies include playing beach volleyball, pickleball, and singing Chinese folk songs. 

\end{document}